\documentclass[12 pt, reqno]{amsart}
\usepackage{mathrsfs}
\usepackage{graphicx}
\usepackage{tikz}
\usepackage{amsmath}
\usepackage{amsfonts}
\usepackage{amssymb}
\usepackage{hyperref}

\newtheorem{theorem}{Theorem}
\newtheorem{lemma}[theorem]{Lemma}

\newtheorem{corollary}[theorem]{Corollary}

\theoremstyle{definition}
\newtheorem{remark}{Remark}

\usepackage{amsthm,amsmath,amssymb,url,cite,color}

\numberwithin{equation}{section}

\def\I{\mathfrak{I}}
 \def\P{\mathcal{P}}
\def\asc{\operatorname{asc}}
\def\Asc{\operatorname{Asc}}
\def\Des{\operatorname{DES}}
\def\wt{\operatorname{wt}}
\def\PF{\operatorname{PF}}

\def\e{{\bf e}}
\def\s{{\bf s}}
\def\mm{{\bf m}}

\def\des{\operatorname{des}}
\def\maj{\operatorname{maj}}
\def\fmaj{\operatorname{fmaj}}

\def\fdes{\operatorname{fdes}}

\def\N{\mathbb N}
\def\Z{\mathbb Z}
\def\R{\mathbb R}
\def\PP{\mathbb P}

\marginparwidth 0pt \marginparsep 0pt
\oddsidemargin +0.1in \evensidemargin 0pt
\topmargin -0.3in
\textwidth 6.3in
\textheight 8.0in

\begin{document}

\title[On the descent polynomial of signed multipermutations]{On the descent polynomial of signed multipermutations}

\author{Zhicong Lin}
\address[Zhicong Lin]{Institut Camille Jordan, UMR 5208 du CNRS, Universit\'{e} Claude Bernard Lyon 1, France}
\email{lin@math.univ-lyon1.fr}

\date{\today}

\begin{abstract}
 Motivated by a  conjecture of Savage and Visontai about the equidistribution of the descent statistic on signed permutations of the multiset $\{1,1,2,2,\ldots,n,n\}$ and the ascent statistic on $(1,4,3,8,\ldots,2n-1,4n)$-inversion sequences,  we investigate the descent polynomial of the signed permutations of a general multiset. We obtain a factorial generating function formula for a $q$-analog of these descent polynomials and apply it to show that they have only real roots.
 Two different proofs of the conjecture of Savage and Visontai are provided. 
\end{abstract}
\keywords{descents, ascents, inversion sequences, signed multipermutations, real-rootedness}

\maketitle

\section{Introduction}
For a sequence of positive integers $\s=\{s_i\}_{i\geq1}$, let the set of {\em$\s$-inversion sequences of length $n$}, $\I_n^{\s}$, be defined as
$$
\I_n^{(\s)}:=\{(e_1,\ldots,e_n)\in\Z^n : 0\leq e_i<s_i\,\text{for}\, 1\leq i\leq n\}.
$$
The {\em ascent set} of an $\s$-inversion sequence $\e=(e_1,\ldots,e_n)\in\I_n^{\s}$ is the set 
$$
\Asc(\e):=\left\{0\leq i<n : \frac{e_i}{s_i}<\frac{e_{i+1}}{s_{i+1}}\right\},
$$
with the convention that $e_0=0$ and $s_0=1$. Let $\asc(\e):=|\Asc(\e)|$ be the {\em ascent statistic} on $\e\in\I_n^{\s}$. The $\s$-inversion sequences and the ascent statistic were introduced by  Savage and Schuster in~\cite{ss}.

A {\em descent} in a permutation $\pi=\pi_1\pi_2\cdots\pi_n$ of a multiset with elements from $\N$ is an index $i\in\{0,1,\ldots,n-1\}$ such that $\pi_i>\pi_{i+1}$ (with the convention that $\pi_0=0$). Denote by $\Des(\pi)$ the set of descents of $\pi$ and  by $\des(\pi)$ the number of descents of $\pi$. The major index of $\pi$, denoted $\maj(\pi)$, is defined as 
$$
\maj(\pi):=\sum_{i\in\Des(\pi)}i.
$$

Let $P(\{1,1,2,2,\ldots,n,n\})$ be the set of permutations of the multiset $\{1,1,2,2,\ldots,n,n\}$.
The following  connection between multiset permutations and  special inversion sequences was shown in~\cite[Theorem~3.23]{sv}.

\begin{theorem}[Savage-Visontai~\cite{sv}]
$$
\sum_{\pi\in P(\{1,1,2,2,\ldots,n,n\})} t^{\des(\pi)}=\sum_{\e\in\I_{2n}^{(1,1,3,2,\ldots,2n-1,n)}}t^{\asc(\e)}
$$
\end{theorem}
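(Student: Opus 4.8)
The plan is to show that both sides of the claimed identity equal the common ``Carlitz--Worpitzky'' expression
$$
(1-t)^{2n+1}\sum_{m\ge 0}\binom{m+2}{2}^{n}t^{m},
$$
i.e.\ that each side is the numerator of the Ehrhart series of the $2n$-dimensional lattice polytope $\Delta^{n}$, where $\Delta=\{(a,b)\in\R^{2}:0\le a\le b\le 1\}$; this makes sense because $|m\Delta\cap\Z^{2}|=\binom{m+2}{2}$, hence $|m\Delta^{n}\cap\Z^{2n}|=\binom{m+2}{2}^{n}$.

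For the multiset side this is classical: by MacMahon's theorem (Carlitz's $q$-multinomial identity at $q=1$), for a multiset $\{1^{a_{1}},\dots,k^{a_{k}}\}$ with $N=\sum_{j}a_{j}$ one has
$$
\sum_{\pi\in P(\{1^{a_{1}},\dots,k^{a_{k}}\})}t^{\des(\pi)}=(1-t)^{N+1}\sum_{m\ge 0}\Bigl(\prod_{j=1}^{k}\binom{m+a_{j}}{a_{j}}\Bigr)t^{m},
$$
the point being that $\prod_{j}\binom{m+a_{j}}{a_{j}}$ counts the order-preserving maps into $\{0,\dots,m\}$ from a disjoint union of chains of sizes $a_{1},\dots,a_{k}$, whose linear extensions are the permutations of the multiset. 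Setting $k=n$ and every $a_{j}=2$ yields the desired form of $\sum_{\pi\in P(\{1,1,\dots,n,n\})}t^{\des(\pi)}$.

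For the inversion-sequence side I would invoke the theorem of Savage and Schuster that, for any positive integer sequence $\s$, $\sum_{\e\in\I_{N}^{(\s)}}t^{\asc(\e)}$ is the numerator of the Ehrhart series of the $\s$-lecture hall polytope $P_{N}^{(\s)}=\{x\in\R_{\ge 0}^{N}:\frac{x_{1}}{s_{1}}\le\cdots\le\frac{x_{N}}{s_{N}}\le 1\}$; that is, $\sum_{m\ge 0}\bigl|mP_{N}^{(\s)}\cap\Z^{N}\bigr|\,t^{m}=\bigl(\sum_{\e}t^{\asc(\e)}\bigr)/(1-t)^{N+1}$. Applying this with $N=2n$ and $\s=(1,1,3,2,\dots,2n-1,n)$ reduces the theorem to the single lattice-point identity
$$
\#\Bigl\{(x_{1},\dots,x_{2n})\in\Z_{\ge 0}^{2n}:\ x_{1}\le x_{2}\le\tfrac{x_{3}}{3}\le\tfrac{x_{4}}{2}\le\tfrac{x_{5}}{5}\le\tfrac{x_{6}}{3}\le\cdots\le\tfrac{x_{2n-1}}{2n-1}\le\tfrac{x_{2n}}{n}\le m\Bigr\}=\binom{m+2}{2}^{n}.
$$

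This lattice-point count is the crux and where the real work lies. I would attempt an induction on $n$, peeling off one block $(x_{2i-1},x_{2i})$ at a time and carrying along the boundary ratio $x_{2i}/i$ that links block $i$ to block $i+1$; the base case $n=1$ is $\#\{0\le x_{1}\le x_{2}\le m\}=\binom{m+2}{2}$. The natural move is to strip off the first block, both of whose denominators are $1$, via $x_{2i-1}=(2i-1)x_{2}+y_{2i-1}$, $x_{2i}=ix_{2}+y_{2i}$ ($i\ge 2$), which turns the remaining constraints into $0\le\frac{y_{3}}{3}\le\frac{y_{4}}{2}\le\cdots\le\frac{y_{2n}}{n}\le m-x_{2}$, leaving a sum over $0\le x_{1}\le x_{2}\le m$. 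The obstacle is that this leftover chain is governed by $(3,2,5,3,\dots,2n-1,n)$, not by a sequence of the original shape $(1,1,3,2,\dots)$, so a plain induction does not close: one must either find a further reparametrization restoring the right shape, or strengthen the hypothesis to allow an arbitrary lower bound on the first ratio (managing the floor/ceiling corrections the ratios $x_{2i-1}/(2i-1)$ introduce), or---probably cleanest---construct an explicit bijection from these lattice points to the tuples $\{((a_{1},b_{1}),\dots,(a_{n},b_{n})):0\le a_{i}\le b_{i}\le m\}$, the delicate part being to check block by block that it respects the chain inequalities. A generating-function alternative is also available---directly comparing the (possibly $q$-refined) factorial generating functions of the two sides---which sidesteps the explicit lattice-point count at the price of setting up that machinery.
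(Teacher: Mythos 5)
Your reduction is set up correctly on both ends: MacMahon's formula does give $\sum_{\pi\in P(\{1,1,\ldots,n,n\})}t^{\des(\pi)}=(1-t)^{2n+1}\sum_{m\ge0}\binom{m+2}{2}^{n}t^{m}$ (this is the $q=1$, all $m_r=2$ case of Theorem~\ref{macmahon}), and the Savage--Schuster theorem legitimately identifies $\sum_{\e\in\I_{2n}^{(1,1,3,2,\ldots,2n-1,n)}}t^{\asc(\e)}$ with the $h^*$-polynomial of the corresponding lecture hall polytope. But the entire content of the theorem is then concentrated in the lattice-point identity you state, namely that the $m$-th dilate of the $(1,1,3,2,\ldots,2n-1,n)$-lecture hall polytope contains exactly $\binom{m+2}{2}^{n}$ integer points, and this you do not prove. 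You candidly record that your induction (peeling off the first block) produces a leftover chain governed by $(3,2,5,3,\ldots,2n-1,n)$ whose count convolves with $\sum_{0\le x_1\le x_2\le m}1$ rather than multiplying, so the induction does not close; the alternatives you list (a reparametrization restoring the shape, a strengthened hypothesis with floor/ceiling bookkeeping, an explicit block-by-block bijection to pairs $0\le a_i\le b_i\le m$, or a $q$-refined generating-function comparison) are only named, not carried out. Since that identity is exactly where the arithmetic of the ratios $x_{2i-1}/(2i-1)\le x_{2i}/i$ has to be confronted, the proposal as written is an honest reduction plus a research plan, not a proof.

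For context: the paper itself does not prove this statement either --- it is quoted from Savage and Visontai --- so there is no internal argument to compare against; but note that the paper's Remark in Section~\ref{conj:sv} explicitly flags how resistant these inversion-sequence/lecture-hall counts are to direct manipulation, which is consistent with the obstacle you hit. If you want a self-contained argument in the spirit of this paper, the route of Section~\ref{conj:sv} adapts: prove a recurrence in $n$ for the coefficients of $(1-t)^{2n+1}\sum_{k\ge0}\binom{k+2}{2}^{n}t^{k}$ by writing $\binom{k+2}{2}=\tfrac12\bigl(k(k-1)\bigr)+2k+1$ and differentiating, and match it against an insertion recurrence for permutations of $\{1,1,\ldots,n+1,n+1\}$ obtained by inserting the two copies of $n+1$; that replaces the unproven lattice-point count by an elementary (if slightly fiddly) counting of insertion positions, provided you also take the Ehrhart-series evaluation for the inversion-sequence side from Savage--Schuster rather than proving it yourself.
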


Now consider the signed multiset permutations.
Let $P^{\pm}(\{1,1,2,2,\ldots,n,n\})$ be the set of all {\em signed} permutations of the multiset $\{1,1,2,2,\ldots,n,n\}$, whose elements are those of the form $\pm\pi_1\pm\pi_2\cdots\pm\pi_n$ with $\pi_1\pi_2\cdots\pi_n\in P(\{1,1,2,2,\ldots,n,n\})$. For convenient, we write $-n$ by $\overline{n}$ for each positive integer $n$. For example, 
$$
P^{\pm}(\{1,1\})=\{1\,1,1\,\overline{1},\overline{1}\,1,\overline{1}\,\overline{1}\}.
$$
Clearly, we have 
$$
|P^{\pm}(\{1,1,2,2,\ldots,n,n\})|=\frac{(2n)!}{2^n}2^{2n}=2^n(2n)!=|\I_{2n}^{(1,4,3,8,\ldots,2n-1,4n)}|.
$$
Savage and Visontai~\cite[Conjecture~3.25]{sv} further conjectured the following equdistribution, which was proved very recently (and independently) by Chen et al.~\cite{ch} using type B $P$-Partitions.
\begin{theorem}\label{conj:VS}
For any $n\geq1$, we have
\begin{equation}\label{con:parti}
\sum_{\pi\in P^{\pm}(\{1,1,2,2,\ldots,n,n\})} t^{\des(\pi)}=\sum_{\e\in\I_{2n}^{(1,4,3,8,\ldots,2n-1,4n)}}t^{\asc(\e)}.
\end{equation}
\end{theorem}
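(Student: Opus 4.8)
The plan is to derive \eqref{con:parti} from a more general generating-function identity for signed permutations of an arbitrary multiset, and then to give a second, more combinatorial proof. Fix a multiset $M=\{1^{m_1},2^{m_2},\ldots,n^{m_n}\}$ with $N=m_1+\cdots+m_n$, and introduce the two-variable polynomial $D_M^{(q)}(t):=\sum_{\pi\in P^{\pm}(M)}q^{\fmaj(\pi)}t^{\fdes(\pi)}$, a $q$-analog of $\sum_{\pi\in P^{\pm}(M)}t^{\des(\pi)}$ built from the flag major index and flag descent number extended to signed multipermutations. The first goal is a Carlitz-type \emph{factorial generating function}: an explicit formula for $D_M^{(q)}(t)$ divided by the natural $q$-analog of $(1-t)^{N+1}$, expressed as a power series in $t$ whose coefficients are products $\prod_{i=1}^{n}\mathcal N_{m_i}(k;q)$ of explicit $q$-binomial-type factors, where $\mathcal N_m(k;1)=\binom{2k+m}{m}$. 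I would prove this by a signed $P$-partition argument in the spirit of MacMahon and Carlitz, interpreting the series as a weighted enumeration of pairs (a signed multipermutation $\pi$, an integer sequence weakly compatible with $\pi$) and sorting the sequences by their range. Setting $q=1$ and $m_i\equiv 2$ then yields
$$
\sum_{\pi\in P^{\pm}(\{1,1,\ldots,n,n\})}t^{\des(\pi)}=(1-t)^{2n+1}\sum_{k\geq 0}\big((k+1)(2k+1)\big)^{n}t^{k},
$$
which I have checked directly for $n\le 2$; moreover the product shape of the right side feeds a standard compatible-polynomials/interlacing argument, giving real-rootedness of $D_M^{(q)}(t)$.

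For the right side of \eqref{con:parti} I would invoke the theory of Savage and Schuster~\cite{ss}: writing $\s=(1,4,3,8,\ldots,2n-1,4n)$, one has $\frac{I_{2n}^{(\s)}(t)}{(1-t)^{2n+1}}=\sum_{k\geq 0}G_{2n}(k)\,t^{k}$, where $G_m(x):=\#\{(\lambda_1,\ldots,\lambda_m)\in\Z_{\geq0}^m:\,0\le\lambda_1/s_1\le\cdots\le\lambda_m/s_m\le x\}$. The crux is the identity $G_{2n}(k)=\big((k+1)(2k+1)\big)^{n}$ for integers $k$; comparing it with the display above proves \eqref{con:parti}. Since the chain couples $(\lambda_{2i-1},\lambda_{2i})$ to $\lambda_{2i-2}$, this does not factor termwise, so I would argue by induction on $n$ using the one-step recursion $G_m(x)=\sum_{0\le\lambda\le s_m x}G_{m-1}(\lambda/s_m)$ and showing, two coordinates at a time, that the passage $G_{2j-2}\mapsto G_{2j}$ sends $(k\mapsto((k+1)(2k+1))^{j-1})$ to $(k\mapsto((k+1)(2k+1))^{j})$ at integer arguments; the otherwise mysterious weight $s_{2j}=4j$ is exactly what makes this two-step passage close.

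Independently, a second proof would proceed via an explicit bijection $\Phi\colon P^{\pm}(\{1,1,\ldots,n,n\})\to\I_{2n}^{(\s)}$ sending $\des$ to $\asc$, obtained by lifting the Savage--Visontai correspondence recalled above: there the letter at position $2i$ ranges over $\{0,\ldots,i-1\}$, and enlarging that alphabet to $\{0,\ldots,4i-1\}$ (a factor $4=2\cdot 2$) provides exactly the room needed to encode the two sign choices attached to the $i$-th pair of the multiset while leaving the ascent count unchanged. Alternatively one can run a purely recursive argument: refine both $\sum_{\pi}t^{\des(\pi)}$ and $I_{2n}^{(\s)}(t)$ by an auxiliary statistic---on the permutation side, whether positions $2n-1$ and $2n$ are descents and whether the final letter is one of the just-inserted copies of $\pm n$; on the inversion side, the analogous data at the last two positions---and check that the resulting vectors of polynomials obey the same linear recursion with the same initial vector.

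\textbf{The main obstacle.} I expect the hardest ingredient to be the key identity $G_{2n}(k)=\big((k+1)(2k+1)\big)^{n}$: because the lecture-hall chain entangles consecutive pairs there is no short multiplicative shortcut, and one has to control the whole family of partial chain counts $G_m$ (as functions of a real bound) through the induction; seeing why the weights $4i$ are precisely those forcing termwise factorization is the technical core. On the permutation side the parallel nuisance is that two equal entries never form a descent and that the convention $\pi_0=0$ interacts with the sign of $\pi_1$, so both the Carlitz-type expansion of $D_M^{(q)}(t)$ and the recursion of the second proof require a careful case analysis.
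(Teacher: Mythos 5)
Your first route is, in outline, the paper's own second proof: Theorem~\ref{thm:signed} is established there by a barred-permutation (equivalently, signed $P$-partition/Carlitz) argument, its specialization $m_1=\cdots=m_n=2$, $q=z=1$ gives \eqref{eq:sign}, and comparison with the Savage--Schuster series \eqref{eq:ehrhart} yields the theorem. However, as written your version has two concrete problems. First, you attach $t$ to $\fdes$ rather than to $\des$. Since $\fdes(\pi)=2\des(\pi)-\chi(\pi_1<0)$, setting $q=1$ in $\sum_{\pi}q^{\fmaj(\pi)}t^{\fdes(\pi)}$ produces the flag-descent polynomial, from which the descent polynomial cannot be read off: already for $M_{\mm}=\{1,1\}$ one gets $1+2t+t^2$ instead of $1+3t$. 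The statistic that must accompany $\fmaj$ is $\des$ itself, as in Chow--Gessel and in Theorem~\ref{thm:signed}; the $\fdes$-version is the Foata--Han formula mentioned in the paper and does not specialize to what you need. This is fixable, but it must be fixed before the claimed $q=1$ specialization is legitimate.

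Second, and more seriously, the step you yourself call the technical core, namely $G_{2n}(k)=\bigl((k+1)(2k+1)\bigr)^n$ for $\s=(1,4,3,8,\ldots,2n-1,4n)$, is left unproved, and the induction you sketch does not close in the form stated: your own recursion $G_m(x)=\sum_{0\le\lambda\le s_mx}G_{m-1}(\lambda/s_m)$ requires values of $G_{2j-2}$ at rational, non-integer bounds $\lambda/(2j-1)$, so an inductive hypothesis valid only ``at integer arguments'' is insufficient; one would need a quasipolynomial description of all partial chain counts at rational dilations, which you acknowledge as the main obstacle but do not supply. In fact this missing ingredient is exactly the Ehrhart-theoretic identity \eqref{eq:ehrhart} of Savage and Schuster (\cite[Theorem~13]{ss}), which both of the paper's proofs simply cite; with that citation in place of your unfinished induction, your first route becomes the paper's comparison argument. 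Your alternative route is at present only a hope: no bijection is actually defined (enlarging the alphabet at even positions by a factor of $4$ is not obviously ascent-preserving, and the paper explicitly remarks that it could not find a direct argument on the inversion-sequence side), and no refined recursion is written down. For contrast, the paper's first proof makes the recursive idea precise: it derives the three-term recurrence \eqref{rec:Ent} for $E_{n,i}$ by differentiating the cited series \eqref{eq:ehrhart}, and matches it against the same recurrence \eqref{rec:Pni} for $P_{n,i}$, obtained by an explicit count of the ways to insert $\{n+1,n+1\}$, $\{\overline{n+1},\overline{n+1}\}$ or $\{n+1,\overline{n+1}\}$. As it stands, then, your proposal is an outline whose essential step is missing rather than a proof.
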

For $n\leq2$, we have 
$$
\sum_{\pi\in P^{\pm}(\{1,1\})} t^{\des(\pi)}=1+3t=\sum_{\e\in\I_{2}^{(1,4)}}t^{\asc(\e)}
$$
and 
$$
\sum_{\pi\in P^{\pm}(\{1,1,2,2\})} t^{\des(\pi)}=1+31t+55t^2+9t^3=\sum_{\e\in\I_{4}^{(1,4,3,8)}}t^{\asc(\e)}.
$$

In section~\ref{conj:sv}, we will give a simple proof of Theorem~\ref{conj:VS} through verifying the recurrence formulas for both sides of Eq.~\eqref{con:parti}.   Motivated by this conjecture, we study the descent polynomial of signed permutations of a general multiset (or called general signed multipermutations for short). In section~\ref{gen:chow}, we proved a factorial generating function formula for the $(\des,\fmaj)$-enumerator of general signed multipermutations, which generalizes a result of Chow and Gessel~\cite{cg}. Finally, in section~\ref{sig:simion}, we use this factorial formula to show that the descent polynomial of the signed multipermutations is real-rootedness.

\section{Proof of Theorem~\ref{conj:VS}}
\label{conj:sv}

\begin{lemma}\label{lem:1}
Let 
$$
E_n(t)=\sum_{i=0}^{2n-1}E_{n,i}t^i:=\sum_{\e\in\I_{2n}^{(1,4,3,8,\ldots,2n-1,4n)}}t^{\asc(\e)}.
$$ 
Then 
\begin{equation}\label{rec:Ent}
E_{n+1,i}=(2i^2+3i+1)E_{n,i}+(2i(4n-2i+3)+2n+1)E_{n,i-1}+(2n+2-i)(4n-2i+5)E_{n,i-2},
\end{equation}
with boundary conditions $E_{n,i}=0$ for $i<0$ or $i>2n-1$.
\end{lemma}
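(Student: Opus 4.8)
I would prove \eqref{rec:Ent} by the standard operation of building the $\s$-inversion sequences of length $2n+2$ from those of length $2n$. Since the sequence $\s$ for length $2(n+1)$ is obtained from the one for length $2n$ by appending the two entries $2n+1$ and $4n+4$, every element of $\I_{2n+2}^{(1,4,\dots,2n+1,4n+4)}$ arises uniquely from an $\e=(e_1,\dots,e_{2n})\in\I_{2n}^{(1,4,\dots,4n)}$ by choosing $e_{2n+1}\in\{0,\dots,2n\}$ and $e_{2n+2}\in\{0,\dots,4n+3\}$. Because the comparisons at positions $0,1,\dots,2n-1$ only involve the first $2n$ entries,
\[
\asc(e_1,\dots,e_{2n+2})=\asc(\e)+\varepsilon_1+\varepsilon_2,\qquad
\varepsilon_1:=\Big[\tfrac{e_{2n}}{4n}<\tfrac{e_{2n+1}}{2n+1}\Big],\quad
\varepsilon_2:=\Big[\tfrac{e_{2n+1}}{2n+1}<\tfrac{e_{2n+2}}{4n+4}\Big],
\]
so $E_{n+1}(t)$ is obtained from $E_n(t)$ by weighting each $\e$ with $\sum_{e_{2n+1},e_{2n+2}}t^{\varepsilon_1+\varepsilon_2}$.

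The difficulty is that this weight is not a function of $\asc(\e)$ alone: it depends on $e_{2n}$. Setting $g(\e):=\lfloor(2n+1)e_{2n}/(4n)\rfloor$, one checks (using that $2n+1$ is coprime to $4n$) that $\varepsilon_1=1$ precisely when $e_{2n+1}\in\{g(\e)+1,\dots,2n\}$, so that exactly $g(\e)+1$ choices of $e_{2n+1}$ give $\varepsilon_1=0$; and for a fixed value $e_{2n+1}=j$ the number of $e_{2n+2}$ with $\varepsilon_2=1$ equals $4n+3-2j-[j\ge n+1]$ (using that $2n+1$ is coprime to $4n+4$). I would therefore track the bivariate polynomial
\[
D_n(t,y):=\sum_{\e\in\I_{2n}^{(1,4,\dots,4n)}}t^{\asc(\e)}\,y^{g(\e)},
\]
observe that the analogous statistic for the extended sequence, $\lfloor(2n+3)e_{2n+2}/(4n+4)\rfloor$, is an explicit function of $e_{2n+2}$ alone (each value in $\{0,\dots,2n+2\}$ being attained on one or two values of $e_{2n+2}$), and assemble from the three ingredients above a recursion expressing $D_{n+1}(t,y)$ through $D_n(t,y)$.

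Finally I would specialize $y=1$. Summing the $\varepsilon_1+\varepsilon_2=0$ contributions uses the identity $\sum_{j=0}^{m}(2j+1)=(m+1)^2$, which is what produces the quadratic coefficients $2i^2+3i+1=(i+1)(2i+1)$ and $(2n+2-i)(4n-2i+5)=(2n+2-i)\bigl(2(2n+2-i)+1\bigr)$ of \eqref{rec:Ent}; the residual floor/parity corrections attached to $g$ do not cancel within a single power of $t$, but they do cancel once the contributions with increments $0,1,2$ in $\asc$ are added together. Making this cancellation rigorous — equivalently, showing that $E_n(t)=D_n(t,1)$ satisfies \eqref{rec:Ent} even though the intermediate recursion genuinely involves $g$ — is the main technical obstacle, and is most naturally handled by carrying a suitable inductive description of $D_n(t,y)$ itself along the way. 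The rest is routine: the base case $E_1(t)=1+3t$ follows from $\I_2^{(1,4)}=\{(0,0),(0,1),(0,2),(0,3)\}$, and $E_{n,i}=0$ for $i<0$ or $i>2n-1$ holds because position $0$ is never an ascent (as $s_1=1$ forces $e_1=0$), whence $0\le\asc(\e)\le 2n-1$.
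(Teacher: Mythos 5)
Your proposal has a genuine gap, and it sits exactly where you acknowledge it. The plan is to build length-$(2n+2)$ inversion sequences from length-$2n$ ones and track the ascent increment coming from the two new entries; your local counts are correct (the number of choices of $e_{2n+1}$ with $\varepsilon_1=0$ is $g(\e)+1$ where $g(\e)=\lfloor(2n+1)e_{2n}/(4n)\rfloor$, and for $e_{2n+1}=j$ the number of $e_{2n+2}$ with $\varepsilon_2=1$ is $4n+3-2j-[j\ge n+1]$). But the weight attached to a fixed $\e$ then depends on $g(\e)$ through sums such as $\sum_{j=0}^{g(\e)}(\cdots)$, which are quadratic in $g(\e)$ together with non-polynomial corrections of the form $\max(0,g(\e)-n)$ coming from the indicator $[j\ge n+1]$; moreover, to iterate you must also propagate the new statistic $\lfloor(2n+3)e_{2n+2}/(4n+4)\rfloor$. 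You never establish that a closed recursion expressing $D_{n+1}(t,y)$ in terms of $D_n(t,y)$ exists, nor that its specialization at $y=1$ collapses to \eqref{rec:Ent}; you explicitly call this ``the main technical obstacle'' and leave it open. As it stands, the claimed cancellation across the increments $0,1,2$ is an assertion, not a proof, so the recurrence is not derived. Notably, the paper itself remarks after this lemma that the author does not see how to deduce \eqref{rec:Ent} directly from the inversion-sequence interpretation, which is precisely the route you are attempting; carrying it out would require substantially more than what you have written (at minimum, control of the joint distribution of $\asc$ and $g$, not just of $\asc$).

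For comparison, the paper's proof avoids this entirely: it starts from the Savage--Schuster Ehrhart-series identity $E_n(t)/(1-t)^{2n+1}=\sum_{k\ge0}((k+1)(2k+1))^n t^k$, writes $(k+1)(2k+1)=2k(k-1)+5k+1$, expresses the resulting sum as $2t^2\bigl(E_n(t)(1-t)^{-2n-1}\bigr)''+5t\bigl(E_n(t)(1-t)^{-2n-1}\bigr)'+E_n(t)(1-t)^{-2n-1}$, multiplies by $(1-t)^{2n+3}$ and compares coefficients of $t^i$. If you want a complete argument, either adopt that generating-function route (citing or reproving the Ehrhart identity), or supply the missing inductive description of the bivariate distribution $D_n(t,y)$ and verify the $y=1$ collapse explicitly; the latter is an open-ended project, not a routine computation.
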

\begin{proof} The following formula was established in~\cite[Theorem~13]{ss} using Ehrhart theory:
\begin{equation}\label{eq:ehrhart}
\frac{E_n(t)}{(1-t)^{2n+1}}=\sum_{k\geq0}((k+1)(2k+1))^nt^k.
\end{equation}
Thus we have
\begin{align*}
\frac{E_{n+1}(t)}{(1-t)^{2n+3}}&=\sum_{k\geq0}((k+1)(2k+1))^{n+1}t^k\\
&=\sum_{k\geq0}((k+1)(2k+1))^{n}(k+1)(2k+1)t^k\\
&=\sum_{k\geq0}((k+1)(2k+1))^{n}(2k(k-1)+5k+1)t^k\\
&=2t^2(E_{n}(t)(1-t)^{-2n-1})''+5t(E_{n}(t)(1-t)^{-2n-1})'+\frac{E_{n}(t)}{(1-t)^{2n+1}}.
\end{align*}
The recursive formula~\eqref{rec:Ent} then follows from the above equation by multiplying both sides by $(1-t)^{2n+3}$ and then extracting the coefficients of $t^i$.
\end{proof}

\begin{lemma}\label{lem:2}
Let 
$$
P_n(t)=\sum_{i=0}^{2n-1}P_{n,i}t^i:=\sum_{\pi\in P^{\pm}(\{1,1,2,2,\ldots,n,n\})} t^{\des(\pi)}.
$$
Then 
\begin{equation}\label{rec:Pni}
P_{n+1,i}=(2i^2+3i+1)P_{n,i}+(2i(4n-2i+3)+2n+1)P_{n,i-1}+(2n+2-i)(4n-2i+5)P_{n,i-2},
\end{equation}
with boundary conditions $P_{n,i}=0$ for $i<0$ or $i>2n-1$.
\end{lemma}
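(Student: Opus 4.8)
The plan is to establish the recurrence \eqref{rec:Pni} directly, by a signed-multiset insertion argument: I will classify, for a fixed $w\in P^{\pm}(\{1,1,\ldots,n,n\})$ with $\des(w)=i$, how the number of descents changes when the two new letters of absolute value $n+1$ are inserted to produce an element of $P^{\pm}(\{1,1,\ldots,n+1,n+1\})$. Deleting the two letters of absolute value $n+1$ from $w'\in P^{\pm}(\{1,1,\ldots,n+1,n+1\})$ is a well-defined map onto $P^{\pm}(\{1,1,\ldots,n,n\})$, and $w'$ is recovered from $w$ together with the data of which of the $2n+1$ gaps of $w$ (the slots $0,1,\ldots,2n$, where slot $0$ immediately follows the virtual letter $w_0=0$) receive the new letters, in what order when they share a gap, and with which signs. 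Counting shows each $w$ has exactly $2(2n+1)(2n+2)$ such children, matching $|P^{\pm}(\{1,1,\ldots,n+1,n+1\})|/|P^{\pm}(\{1,1,\ldots,n,n\})|$. So it suffices to show that a fixed $w$ with $\des(w)=i$ has exactly $2i^2+3i+1$ children with $\des=i$, exactly $2(i+1)(4n-2i+1)+2n+1$ children with $\des=i+1$, and exactly $(2n-i)(4n-2i+1)$ children with $\des=i+2$; multiplying by $P_{n,i}$ and reindexing the last two contributions by $i\mapsto i-1$ and $i\mapsto i-2$ then yields \eqref{rec:Pni} termwise.

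The point that makes the bookkeeping manageable is that $n+1$ is strictly larger than every letter of $w$ (including $w_0=0$) while $\overline{n+1}$ is strictly smaller than every letter of $w$ (including $w_0=0$). Hence: inserting a letter $\pm(n+1)$ into a gap $g$ of $w$ destroys an existing descent of $w$ only if $g$ is one of the $i$ descent gaps of $w$; a newly inserted $n+1$ creates a descent with its immediate successor except when that successor is another newly inserted $n+1$ or when the $n+1$ stands at the very end of $w'$; and a newly inserted $\overline{n+1}$ creates a descent with its immediate predecessor except when that predecessor is another newly inserted $\overline{n+1}$. Consequently $\des(w')=\des(w)-(\#\{\text{descent gaps of }w\text{ used}\})+(\#\{\text{descents created by the new letters}\})$, and in particular $\des(w')\in\{i,i+1,i+2\}$, which is why only three terms appear in \eqref{rec:Pni}.

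It then remains to run the case analysis. I split on the sign pattern of the inserted pair --- $\{n{+}1,n{+}1\}$, $\{\overline{n{+}1},\overline{n{+}1}\}$, or the mixed $\{n{+}1,\overline{n{+}1}\}$ --- and within each on whether the two letters occupy the same gap or two distinct gaps, and on whether the rightmost inserted letter sits in the last gap $2n$ (where an inserted $n+1$ ends $w'$ and so creates no descent, whereas an inserted $\overline{n+1}$ still does). For the two equal-letter patterns I count unordered gap choices directly ($2n+1$ same-gap placements and $\binom{2n+1}{2}$ distinct-gap placements) to avoid over-counting, while the mixed pattern contributes $(2n+1)(2n+2)$ placements counted with the order of the two distinguishable letters. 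In every case the descent-destroying correction equals the number of chosen gaps that are descent gaps of $w$, which is $0$, $1$, or $2$ and is enumerated by elementary counts of the shape $\binom{i}{2}$, $i(2n-i)$, $\binom{2n-i}{2}$, and so on; summing the three patterns produces the three claimed child-counts. The base case used to combine this with Lemma~\ref{lem:1} toward Theorem~\ref{conj:VS} is $P_1(t)=1+3t$, immediate from $P^{\pm}(\{1,1\})=\{1\,1,\,1\,\overline{1},\,\overline{1}\,1,\,\overline{1}\,\overline{1}\}$ with descent numbers $0,1,1,1$.

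The main obstacle is purely the correctness of this finite case analysis: the delicate points are the anomalous behavior of the last gap (an $n+1$ inserted there contributes no descent, an $\overline{n+1}$ does), the same-gap configurations in which the two inserted letters are adjacent and interact --- e.g. $\cdots(n{+}1)(n{+}1)\cdots$, where only the right copy contributes a descent, and $\cdots(\overline{n{+}1})(\overline{n{+}1})\cdots$, where only the left copy does --- and keeping the ordered count (for the mixed pattern) and the unordered counts (for the equal-letter patterns) from clashing. Each of these is routine once isolated, and there are built-in consistency checks: the three child-counts must sum to $2(2n+1)(2n+2)$, and the resulting coefficients must reproduce exactly those in \eqref{rec:Pni}.
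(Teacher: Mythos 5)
Your proposal is correct and follows essentially the same route as the paper: the paper also proves the recurrence by inserting the pair $\{n+1,n+1\}$, $\{\overline{n+1},\overline{n+1}\}$ or $\{n+1,\overline{n+1}\}$ into a signed permutation of $\{1,1,\ldots,n,n\}$ and counting, for a parent with $i$ descents, how many insertions leave the descent number unchanged or raise it by one or two (with the same care about insertions at the rightmost gap). Your per-parent child counts $2i^2+3i+1$, $2(i+1)(4n-2i+1)+2n+1$ and $(2n-i)(4n-2i+1)$ are exactly the coefficients of \eqref{rec:Pni} after the reindexing you describe, and they pass the total-count check $2(2n+1)(2n+2)$, so the argument goes through.
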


\begin{proof}
Denote by $\P_{n,i}$ the set of signed permutation of $\{1,1,2,2,\ldots,n,n\}$ with $i$ descents.
Clearly, every signed permutation in $\P_{n+1,i}$ can be obtained from one of the following three different cases:
\begin{enumerate}
\item from a signed permutation in $\P_{n,i}$ by inserting $\{n+1,n+1\}$, $\{\overline{n+1},\overline{n+1}\}$ or $\{n+1,\overline{n+1}\}$;
\item from a signed permutation in $\P_{n,i-1}$ by inserting $\{n+1,n+1\}$, $\{\overline{n+1},\overline{n+1}\}$ or $\{n+1,\overline{n+1}\}$;
\item from a signed permutation in $\P_{n,i-2}$ by inserting $\{n+1,n+1\}$, $\{\overline{n+1},\overline{n+1}\}$ or $\{n+1,\overline{n+1}\}$.
\end{enumerate}

In case $(1)$,
one can check that there are $2i^2+3i+1$ ways to insert $\{n+1,n+1\}$, $\{\overline{n+1},\overline{n+1}\}$ or $\{n+1,\overline{n+1}\}$ into each signed permutation in $\P_{n,i}$ to become a signed permutation in $\P_{n+1,i}$ (be careful when $n+1$ or $\overline{n+1}$ is inserted to the right of a signed permutation). So there are $(2i^2+3i+1)P_{n,i}$ signed permutations constructed from case $(1)$. Similarly, there are $(2i(4n-2i+3)+2n+1)P_{n,i-1}$ and $(2n+2-i)(4n-2i+5)P_{n,i-2}$ signed permutations in $\P_{n+1,i}$ constructed from cases $(2)$ and $(3)$, respectively. Those amount to the right hand side of~\eqref{rec:Pni}, which completes the proof.
\end{proof}

By Lemma~\ref{lem:1} and~\ref{lem:2}, we see that $E_{n,i}$ and $P_{n,i}$ satisfy the same recurrence relation and boundary conditions, so they are equal. This finishes the proof of Theorem~\ref{conj:VS}.

\begin{remark}
We do not see how to deduce recurrence~\eqref{rec:Ent} for $E_{n,i}$ directly from its interpretation as inversion sequences.
\end{remark}

\section{General signed multipermutations}
\label{gen:chow}

In this section, we consider the descent polynomial on signed permutations of the general multiset 
$
M_{\mm}:=\{1^{m_1},2^{m_2},\ldots,n^{m_n}\}
$
for each vector $\mm:=(m_1,m_2,\ldots,m_n)\in\PP^n$. Let $P(\mm)$ and $P^{\pm}(\mm)$ denote the set of all permutations and signed permutations of the multiset $M_{\mm}$, respectively.

Recall that the $q$-shift factorial $(t;q)_n$ is defined by $(t;q)_n :=\prod_{i=0}^{n-1}(1-tq^i)$ for any positive integer $n$ and $(t;q)_0=1$. The $q$-binomial coefficient ${n\brack k}_q$ is then defined as 
$$
{n\brack k}_q:=\frac{(q;q)_n}{(q;q)_{n-k}(q;q)_k}.
$$
The following  is a $q$-analog of a result of MacMahon~\cite[Volme~2, p.~211]{MM}, whose proof can  be found in~\cite[\S~7]{fh:q}.

\begin{theorem} \label{macmahon}
For every $\mm\in\PP^n$ with $m_1+\cdots+m_n=m$, we have
\begin{equation*}
\frac{\sum_{\pi\in P(\mm)}t^{\des(\pi)}q^{\maj(\pi)}}{(t;q)_{m+1}}=\sum_{k\geq0}{m_1+k\brack m_1}_q\cdots{m_n+k\brack m_n}_qt^k.
\end{equation*}
\end{theorem}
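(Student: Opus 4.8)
The plan is to clear the denominator and compare coefficients of powers of $t$. Using the elementary expansion $1/(t;q)_{m+1}=\sum_{j\ge0}{m+j\brack m}_q t^j$ and the convention ${m+k-\des(\pi)\brack m}_q:=0$ whenever $k<\des(\pi)$, the asserted identity is equivalent to
\[
\sum_{\pi\in P(\mm)}q^{\maj(\pi)}{m+k-\des(\pi)\brack m}_q=\prod_{i=1}^{n}{m_i+k\brack m_i}_q\qquad(k\ge0).
\]
I would establish this $q$-analogue of MacMahon's counting identity by a $P$-partition style argument, in two steps.

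First, a compatibility lemma. Fix $\pi=\pi_1\pi_2\cdots\pi_m\in P(\mm)$ and call an integer sequence $c=(c_1,\ldots,c_m)$ \emph{$\pi$-compatible} if $k\ge c_1\ge c_2\ge\cdots\ge c_m\ge0$ and $c_i>c_{i+1}$ for every $i\in\Des(\pi)$. The claim is that
\[
\sum_{c\ \pi\text{-compatible}}q^{c_1+c_2+\cdots+c_m}=q^{\maj(\pi)}{m+k-\des(\pi)\brack m}_q.
\]
I would prove this via the weight-recording bijection $c\mapsto d$ with $d_i:=c_i-|\{j\in\Des(\pi):j\ge i\}|$, which sends $\pi$-compatible sequences bijectively onto weakly decreasing sequences $k-\des(\pi)\ge d_1\ge\cdots\ge d_m\ge0$ (the forced strictness at descents is exactly what turns the weak chain into a weak chain after shifting), together with the identity $\sum_{i=1}^m|\{j\in\Des(\pi):j\ge i\}|=\sum_{j\in\Des(\pi)}j=\maj(\pi)$; the factor ${m+k-\des(\pi)\brack m}_q$ is then the generating polynomial, by size, of partitions fitting inside an $m\times(k-\des(\pi))$ box. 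Note that the convention $\pi_0=0$ is harmless here, since all letters of $\pi$ are positive and hence $0\notin\Des(\pi)$.

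Second, the main bijection. It remains to show that summing the left side of the lemma's identity over all $\pi\in P(\mm)$ produces $\prod_i{m_i+k\brack m_i}_q$. For this I would set up a weight-preserving bijection between the set of pairs $(\pi,c)$ with $\pi\in P(\mm)$ and $c$ a $\pi$-compatible sequence, and the set of maps $g\colon M_\mm\to\{0,1,\ldots,k\}$ (the $m_i$ copies of $i$ being indistinguishable, so $g$ restricted to them is a size-$m_i$ multiset from $\{0,\ldots,k\}$), under which $c_1+\cdots+c_m=\sum_{x\in M_\mm}g(x)$. Given $g$, list the elements of $M_\mm$ in order of decreasing $g$-value, breaking ties by increasing letter value ($1$'s before $2$'s, etc.); this produces the word $\pi$, and $c_i:=g(\pi_i)$ is $\pi$-compatible because equal consecutive $c$-values force, by the tie-breaking rule, a weak rise in $\pi$, i.e.\ a non-descent. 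Conversely, from $(\pi,c)$ one assigns to the copies of $i$ the multiset $\{c_j:\pi_j=i\}$, and $\pi$-compatibility guarantees that re-sorting recovers $(\pi,c)$ exactly. Since the restrictions of $g$ to the $n$ value-blocks are chosen independently and the generating polynomial by sum of a size-$m_i$ multiset from $\{0,\ldots,k\}$ is ${m_i+k\brack m_i}_q$, the total is $\prod_{i=1}^n{m_i+k\brack m_i}_q$, as wanted.

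I expect the effort to be almost entirely bookkeeping rather than conceptual. The points needing care are: keeping the strict/weak inequality conventions consistent with the $\pi_0=0$ descent convention; choosing the shift in the compatibility lemma so that the exponent telescopes exactly to $\maj(\pi)$ and not to a complementary statistic; and verifying that the tie-breaking rule in the main bijection is precisely what makes both composite maps the identity. None of these is a genuine obstacle — the whole argument is the natural $q$-lift of MacMahon's original bijective proof — so the citation to \cite{fh:q} could be replaced by this self-contained derivation if desired.
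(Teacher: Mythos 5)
Your proposal is correct: the coefficient-of-$t^k$ reformulation, the compatibility lemma with the shift $d_i=c_i-|\{j\in\Des(\pi):j\ge i\}|$, and the sort-by-value bijection all check out (including the observation that the $\pi_0=0$ convention is vacuous for positive letters). The paper itself only cites Foata--Han for this statement, but its own barred-permutation proof of the signed generalization (Theorem~\ref{thm:signed}, which reduces to this at $z=0$) is essentially the same two-way count as yours --- setting $c_j$ equal to the number of bars in spaces $i\ge j$ turns a barred permutation with $k$ bars into exactly one of your $\pi$-compatible sequences bounded by $k$ --- so your argument is the paper's approach rephrased in $P$-partition language rather than a genuinely different route.
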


Our signed version is:

\begin{theorem} \label{thm:signed}
For every $\mm\in\PP^n$ with $m_1+\cdots+m_n=m$, we have
\begin{equation}\label{sign:eq}
\frac{\sum_{\pi\in P^{\pm}(\mm)}t^{\des(\pi)}q^{\fmaj(\pi)}z^{N(\pi)}}{(t;q^2)_{m+1}}=\sum_{k\geq0}\prod_{r=1}^n\left(\sum_{i=0}^{m_r}(zq)^i{m_r-i+k\brack m_r-i}_{q^2}{i+k-1\brack i}_{q^2}\right)t^k,
\end{equation}
where $\fmaj(\pi):=2\maj(\pi)+N(\pi)$ and $N(\pi)$ is the number of negative signs in $\pi$.
\end{theorem}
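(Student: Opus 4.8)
\medskip
\noindent\textbf{Proof proposal.} The plan is to adapt to the signed setting the $P$-partition style argument behind Theorem~\ref{macmahon} (as carried out in \cite[\S7]{fh:q}), the only new ingredient being the bookkeeping of signs through a ``flag grading''. Introduce the weight $\omega(v):=2v$ for an integer $v\geq0$ and $\omega(-b):=2b-1$ for $b\geq1$. The first step is to reinterpret the right side of~\eqref{sign:eq}: expanding each $q^2$-binomial as the generating function of a bounded multiset, a routine blockwise check gives
\[
\sum_{i=0}^{m_r}(zq)^i{m_r-i+k\brack m_r-i}_{q^2}{i+k-1\brack i}_{q^2}=\sum_{f}z^{N(f)}q^{\wt(f)},
\]
where $f$ ranges over all assignments of a value in $\{-k,\ldots,k\}$ to each of the $m_r$ interchangeable copies of the letter $r$, $N(f)$ counts the copies with a negative value, and $\wt(f):=\sum_{c}\omega(f(c))$. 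Multiplying over $r$, the coefficient of $t^k$ on the right of~\eqref{sign:eq} equals $\sum_f z^{N(f)}q^{\wt(f)}$, the sum now over all value-assignments $f\colon M_{\mm}\to\{-k,\ldots,k\}$.

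Next I would standardize such an $f$. List the $m$ copies in weakly decreasing order of their flag values $\omega(f(c))$, breaking ties (equal $f$-value) so that among copies carrying a common nonnegative value the letters increase, and among copies carrying a common negative value the letters decrease in absolute value. Recording the signs produces a signed multipermutation $\pi=\pi_1\cdots\pi_m\in P^{\pm}(\mm)$, and $f\mapsto\pi$ is surjective with fibres given by the usual compatibility conditions: $f$ lies over $\pi$ exactly when the flag values $u_l:=\omega(f(\pi_l))$ are weakly decreasing, the parity of $u_l$ equals the sign of $\pi_l$, and $u_l>u_{l+1}$ at every internal descent $l\in\Des(\pi)\cap\{1,\ldots,m-1\}$. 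Writing $u_l=2a_l+\epsilon_l$ with $\epsilon_l:=1$ if $\pi_l<0$ and $\epsilon_l:=0$ otherwise, these conditions collapse to $a_1\geq a_2\geq\cdots\geq a_m\geq0$ with $a_l>a_{l+1}$ at each internal descent. Sorting by $\omega$ is what makes this work: $\wt(f)=\sum_l u_l=2\sum_l a_l+N(\pi)$, while the leading ``$\pi_0=0$'' descent (the one at position $0$, present iff $\pi_1<0$) is recorded precisely by $\epsilon_1$; hence $\des(\pi)=\widetilde{\des}(\pi)+\epsilon_1$, where $\widetilde{\des}$ counts internal descents, and $\maj(\pi)$ is the sum of the internal descents.

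Finally I would peel off the descents in the standard way: set $b_l:=a_l-\#\{\,j\geq l:\ j\text{ an internal descent of }\pi\,\}$, so that $b_1\geq b_2\geq\cdots\geq b_m\geq0$ is an arbitrary partition with at most $m$ parts, $\sum_l a_l=|b|+\maj(\pi)$, and $\max_c|f(c)|=b_1+\des(\pi)$. Since a function $f$ with values in $\{-k,\ldots,k\}$ occurs exactly for $k\geq\max_c|f(c)|$, summing the geometric series and then the remainder, and using $\sum_{b_1\geq\cdots\geq b_m\geq0}q^{2|b|}t^{b_1}=1/(tq^2;q^2)_m$, the right side of~\eqref{sign:eq} becomes
\[
\frac1{1-t}\cdot\frac1{(tq^2;q^2)_m}\sum_{\pi\in P^{\pm}(\mm)}t^{\des(\pi)}q^{2\maj(\pi)+N(\pi)}z^{N(\pi)}=\frac{\sum_{\pi\in P^{\pm}(\mm)}t^{\des(\pi)}q^{\fmaj(\pi)}z^{N(\pi)}}{(t;q^2)_{m+1}},
\]
which is exactly the left side.

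The main obstacle is the second step: one must pin down the tie-breaking rule and the flag grading so that (a) the sign-sensitive weight $\omega$ is additive along $\pi$, (b) the internal descents of $\pi$ match precisely the forced strict drops of the flag values, with the position-$0$ descent absorbed into the bit $\epsilon_1$, and (c) the decreasing (rather than increasing) orientation is the one that delivers $\maj$ rather than its complement --- this is the same analysis as in the unsigned case of \cite[\S7]{fh:q}, now carried through with signs. As a consistency check, for $\mm=(1,1,\ldots,1)$ the inner sum telescopes to $[k+1]_{q^2}+zq[k]_{q^2}$, which at $z=1$ is $[2k+1]_q$, so~\eqref{sign:eq} specializes to the Chow--Gessel formula of \cite{cg}.
\medskip
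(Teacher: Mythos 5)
Your proposal is correct, and the verifications that you flag as the ``main obstacle'' do go through: with $\omega(v)=2v$, $\omega(-b)=2b-1$, a parity check on the three possible sign patterns at a position $l$ (a descent from a negative to a positive letter being impossible, and a non-descent from a positive to a negative letter being impossible) shows that weak decrease of the $u_l$ with strict drops exactly at internal descents does collapse to $a_1\geq\cdots\geq a_m\geq0$ with strict drops at internal descents, and $\max_c|f(c)|=a_1+\epsilon_1=b_1+\des(\pi)$, so the geometric series in $k$ and the partition generating function $1/(tq^2;q^2)_m$ assemble into $(t;q^2)_{m+1}$ as you claim. The route is, however, not the one taken in the paper. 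The paper argues via barred permutations in the style of Gessel and Stanley~\cite{gs}: it counts signed multipermutations decorated with bars (at least one bar in each descent space) by the weight $t^{\#\mathrm{bars}}q^{N(\pi)+2\sum ib_i}z^{N(\pi)}$ in two ways; fixing $\pi$ and summing geometric series over the bars in each of the $m+1$ spaces produces the left side of~\eqref{sign:eq} directly, while fixing $k$ bars and inserting the letters $\pm r$ into the $k+1$ compartments (negatives forbidden in the leftmost one) produces the coefficient of $t^k$ on the right. Your ``fixed $k$'' interpretation is essentially the same combinatorial model (compartment index plus sign is your value in $\{-k,\ldots,k\}$), but the other half of the argument differs: where the paper only needs geometric series over bar multiplicities, you run the MacMahon Verfahren/$P$-partition machinery --- standardization with a tie-breaking rule, the staircase shift $b_l=a_l-\#\{j\geq l\ \text{internal descent}\}$, and the identity $\sum_{b_1\geq\cdots\geq b_m\geq0}q^{2|b|}t^{b_1}=1/(tq^2;q^2)_m$. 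The paper's barred-permutation count is shorter and avoids any standardization or tie-breaking; your version is closer in spirit to the proofs of Theorem~\ref{macmahon} in~\cite{fh:q} and of the Chow--Gessel formula~\cite{cg}, and it has the mild advantage of making explicit the bijection between signed multipermutations paired with compatible flag sequences and the value-assignments counted on the right, which is the signed analogue of the classical $P$-partition decomposition.
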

\begin{remark}
Setting $z=0$, we recover Theorem~\ref{macmahon}.
It is worth noticing that Foata and Han~\cite[Theorem~1.2]{fh:sw3} has calculated (another signed version of Theorem~\ref{macmahon}) the factorial generating function formula for 
$$
\sum_{\pi\in P^{\pm}(\mm)}t^{\fdes(\pi)}q^{\fmaj(\pi)}z^{N(\pi)},
$$
involving  the so-called {\em flag descent statistic} $\fdes$, $\fdes(\pi):=2\des(\pi)-\chi(\pi_1<0)$, on signed multipermutations (or words).
\end{remark}

\begin{corollary}[Chow-Gessel~\cite{cg}]
$$
\frac{\sum_{\pi\in P^{\pm}(\{1,2,\ldots,n\})} t^{\des(\pi)}q^{\fmaj(\pi)}}{(t;q^2)_{2n+1}}=\sum_{k\geq0}[2k+1]_q^nt^k.
$$
\end{corollary}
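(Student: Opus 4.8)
The plan is to derive the Chow--Gessel corollary as a direct specialization of Theorem~\ref{thm:signed}. The relevant multiset is $M_{\mm}=\{1,2,\ldots,n\}$, which corresponds to $\mm=(1,1,\ldots,1)\in\PP^n$, so $m_r=1$ for every $r$ and $m=n$. Substituting this into the right-hand side of~\eqref{sign:eq}, each inner factor becomes a sum over $i\in\{0,1\}$:
$$
\sum_{i=0}^{1}(zq)^i{1-i+k\brack 1-i}_{q^2}{i+k-1\brack i}_{q^2}
={k+1\brack 1}_{q^2}{k-1\brack 0}_{q^2}+zq{k\brack 0}_{q^2}{k\brack 1}_{q^2}.
$$

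First I would evaluate these two $q^2$-binomials explicitly. We have ${k-1\brack 0}_{q^2}={k\brack 0}_{q^2}=1$, while ${k+1\brack 1}_{q^2}=\frac{1-q^{2(k+1)}}{1-q^2}=[k+1]_{q^2}$ and ${k\brack 1}_{q^2}=[k]_{q^2}$ in the obvious $q^2$-integer notation. So each factor equals $[k+1]_{q^2}+zq[k]_{q^2}$. Next I would set $z=1$ (since for $\mm=(1,\ldots,1)$ there is no multiplicity and $P^{\pm}(\{1,2,\ldots,n\})$ is just the hyperoctahedral group, with $N(\pi)$ still meaningful as the number of negative entries, but the Chow--Gessel formula as stated does not track $z$). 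With $z=1$ the factor becomes $[k+1]_{q^2}+q[k]_{q^2}$, and the key elementary identity to check is
$$
[k+1]_{q^2}+q[k]_{q^2}=\frac{1-q^{2k+2}}{1-q^2}+q\cdot\frac{1-q^{2k}}{1-q^2}=\frac{(1-q^{2k+2})+(q-q^{2k+1})}{1-q^2}=\frac{(1+q)(1-q^{2k+1})}{1-q^2}=\frac{1-q^{2k+1}}{1-q}=[2k+1]_q.
$$
Raising to the $n$-th power turns the product $\prod_{r=1}^n$ into $[2k+1]_q^n$, which is exactly the right-hand side of the corollary.

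On the left-hand side, setting $z=1$ in~\eqref{sign:eq} removes the $z^{N(\pi)}$ factor, and the denominator $(t;q^2)_{m+1}$ becomes $(t;q^2)_{n+1}$. But the corollary as written has $(t;q^2)_{2n+1}$ in the denominator, so this is where care is needed: one must check that $(t;q^2)_{2n+1}$ in the Chow--Gessel statement matches the normalization coming from $m=n$. I would reconcile this by noting that for signed permutations of $\{1,2,\ldots,n\}$ the natural ``signed length'' is $2n$ rather than $n$, so $(t;q^2)_{m+1}$ should in fact be read with $m$ counting signed letters; alternatively the cleanest route is to compare with Theorem~\ref{macmahon} specialized the same way and track how the signed factorial replaces $(t;q)_{m+1}$ by $(t;q^2)_{2m+1}$. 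The only genuine obstacle is this bookkeeping of the denominator exponent: once the $q$-binomial evaluation $[k+1]_{q^2}+q[k]_{q^2}=[2k+1]_q$ is in hand, the rest is immediate, so I would present the proof as the two-line binomial computation above followed by the substitution $z=1$, $\mm=(1^n)$ into Theorem~\ref{thm:signed}.
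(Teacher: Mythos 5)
Your specialization is exactly the paper's proof: the paper disposes of this corollary with the single line ``setting $m_1=\cdots=m_n=1$ and $z=1$ in Theorem~\ref{thm:signed}'', and the content of that line is precisely your computation that each factor on the right of \eqref{sign:eq} collapses to $[k+1]_{q^2}+zq[k]_{q^2}$, which at $z=1$ equals $[2k+1]_q$, so the right-hand side becomes $\sum_{k\geq0}[2k+1]_q^n t^k$ while the left-hand side becomes $\sum_{\pi}t^{\des(\pi)}q^{\fmaj(\pi)}$ divided by $(t;q^2)_{m+1}$ with $m=n$. That part of your argument is complete and correct.

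The place where you hesitate, the denominator, should not be ``reconciled'' in the way you propose. Theorem~\ref{thm:signed} genuinely yields $(t;q^2)_{n+1}$ here, and $(t;q^2)_{n+1}$ is the correct denominator in the Chow--Gessel identity; the exponent $2n+1$ in the displayed corollary is a typo, not a different normalization to be matched. Your suggested fixes (reading $m$ as the number of signed letters $2n$, or asserting that the signed version replaces $(t;q)_{m+1}$ by $(t;q^2)_{2m+1}$) contradict Theorem~\ref{thm:signed} as stated and are also inconsistent with the next corollary, where $\mm=(2,2,\ldots,2)$, $m=2n$ gives $(1-t)^{2n+1}$ rather than $(1-t)^{4n+1}$. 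A check at $n=1$ settles the matter: the numerator is $1+tq$ (from $\pi=1$ and $\pi=\overline{1}$, the latter having $\des=1$, $\fmaj=1$), and
$$
\frac{1+tq}{(1-t)(1-tq^2)}=\sum_{k\geq0}[2k+1]_q\,t^k,
$$
whereas dividing instead by $(t;q^2)_{3}$ does not reproduce this series. So the corollary should be read with $(t;q^2)_{n+1}$, and with that correction your two-line binomial computation is a full proof, identical in substance to the paper's.
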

\begin{proof}
Setting $m_1=\cdots=m_n=1$ and $z=1$ in Theorem~\ref{thm:signed}.
\end{proof}

\begin{corollary}
\begin{equation}\label{eq:sign}
\frac{\sum_{\pi\in P^{\pm}(\{1,1,2,2,\ldots,n,n\})} t^{\des(\pi)}}{(1-t)^{2n+1}}=\sum_{k\geq0}((k+1)(2k+1))^nt^k.
\end{equation}
\end{corollary}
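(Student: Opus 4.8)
The plan is to derive this identity as a direct specialization of Theorem~\ref{thm:signed} to the vector $\mm=(2,2,\ldots,2)\in\PP^n$, for which $M_{\mm}=\{1,1,2,2,\ldots,n,n\}$ and $m=m_1+\cdots+m_n=2n$, followed by setting $z=1$ and $q=1$. First I would record that this substitution is harmless: coefficientwise in $t$, the right-hand side of~\eqref{sign:eq} is a polynomial in $q$ and $z$, and the left-hand side is such a polynomial divided by $(t;q^2)_{m+1}$, so~\eqref{sign:eq} is an identity of formal power series in $t$ with coefficients in $\Z[q,z]$ and remains valid at $q=z=1$.

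With these choices the left-hand side becomes exactly the desired quantity: the statistics $\fmaj(\pi)$ and $N(\pi)$ enter only through $q$ and $z$, so the numerator collapses to $\sum_{\pi\in P^{\pm}(\{1,1,2,2,\ldots,n,n\})}t^{\des(\pi)}$, while the denominator $(t;q^2)_{m+1}$ becomes $(t;1)_{2n+1}=(1-t)^{2n+1}$. On the right-hand side, since $m_r=2$ for every $r$, at $q=1$ all $n$ factors of $\prod_{r=1}^n$ coincide, so the coefficient of $t^k$ is
\[
\Bigl(\sum_{i=0}^{2}\binom{2-i+k}{2-i}\binom{i+k-1}{i}\Bigr)^{\!n}.
\]
The only genuine computation is to check that the inner sum equals $(k+1)(2k+1)$: writing it as $\binom{k+2}{2}+k(k+1)+\binom{k+1}{2}$ and pulling out $k+1$ does it in one line. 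This yields $((k+1)(2k+1))^n$ as the coefficient of $t^k$, which is~\eqref{eq:sign}.

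I do not expect a real obstacle here; the argument is pure bookkeeping once Theorem~\ref{thm:signed} is available. The one place asking for a moment of care is the boundary case $k=0$, where the $q$-binomial ${i+k-1\brack i}_{q^2}$ reads ${i-1\brack i}_{q^2}$; I would note the convention (equal to $1$ for $i=0$ and $0$ for $i\ge1$, consistent with reading ${i+k-1\brack i}$ as the generating polynomial for partitions into at most $i$ parts of size at most $k-1$) and verify that the displayed evaluation still gives $(k+1)(2k+1)=1$ there. Finally, I would remark that combining~\eqref{eq:sign} with formula~\eqref{eq:ehrhart} from the proof of Lemma~\ref{lem:1} gives yet another proof of Theorem~\ref{conj:VS}, independent of the recurrence argument of Section~\ref{conj:sv}.
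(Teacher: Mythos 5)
Your proposal is correct and follows exactly the paper's route: the paper's proof is the one-line specialization $m_1=\cdots=m_n=2$, $z=q=1$ in Theorem~\ref{thm:signed}, and your verification that $\binom{k+2}{2}+k(k+1)+\binom{k+1}{2}=(k+1)(2k+1)$ (including the $k=0$ convention check) just fills in the arithmetic the paper leaves implicit.
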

\begin{proof}
Setting $m_1=\cdots=m_n=2$ and $z=q=1$ in Theorem~\ref{thm:signed}.
\end{proof}
\begin{remark}
Comparing~\eqref{eq:sign} with~\eqref{eq:ehrhart} we get another proof of Theorem~\ref{conj:VS}. As was already noticed in~\cite{sv}, there is not natural  $q$-analog of Eq.~\eqref{eq:ehrhart}.
\end{remark}

We will prove Theorem~\ref{thm:signed} by using the technique of barred permutation motivated by Gessel and Stanley~\cite{gs}. For each $\pi=\pi_1\cdots\pi_m\in P^{\pm}(\mm)$, we call the space between $\pi_i$ and $\pi_{i+1}$ the  $i$-th space of $\pi$ for $0< i< m$. We also call the space before $\pi_1$ and the space after $\pi_{m}$ the $0$-th space and the $m$-th space of $\pi$, respectively. If $i\in\Des(\pi)$, then we call the $i$-th space a descent space.

A {\em barred permutation} on $\pi\in P^{\pm}(\mm)$ is obtained by  inserting one or more vertical bars into some spaces of $\pi$  such that there is at least one bar in every descent space of $\pi$.  
 For example, $||\bar{1}|\bar{2}2|1||$ is a barred permutation on $\pi=\bar{1}\bar{2}21$ but $\bar{1}|\bar{2}2|1||$ is not, since $0\in\Des(\pi)$ and there is not bar in the $0$-th space (i.e. before $\pi_1=\bar{1}$).

\begin{proof}[{\bf Proof of Theorem~\ref{thm:signed}}] Let $B(\mm)$ be the set of barred permutations on $P^{\pm}(\mm)$. Let $\sigma\in B(\mm)$ be a barred permutation on $\pi$ with $b_i$ bars in the $i$-th space of $\pi$, we define the weight  $\wt(\sigma)$ to be
$$
\wt(\sigma):=t^{\sum b_i}q^{N(\pi)+2\sum i b_i}z^{N(\pi)}.
$$
For example, $\wt(||\bar{1}|\bar{2}2|1||)=t^6q^{26}z^2$. Now we count the barred permutations in $B(\mm)$ by the  weight $\wt$ in two different ways. First, fix a permutation $\pi$, and sum over all barred permutations on $\pi$. Then fix the number of bars $k$, and sum over all barred permutations with $k$ bars.

Fix a permutation $\pi$, a barred permutation on $\pi$ can be obtained by inserting one bar in each descent space and then inserting any number of bars in every space. So counting all the barred permutations on $\pi$ by the weight $\wt$ gives
$$
t^{\des(\pi)}q^{\fmaj(\pi)}z^{N(\pi)}(1+t+t^2+\cdots)(1+tq^2+(tq^2)^2+\cdots)\cdots(1+tq^{2m}+(tq^{2m})^2+\cdots),
$$
which is equal to $\frac{t^{\des(\pi)}q^{\fmaj(\pi)}z^{N(\pi)}}{(t;q^2)_{m+1}}$. This shows that 
\begin{equation}\label{left}
\sum_{\sigma\in B(\mm)}\wt(\sigma)=\frac{\sum_{\pi\in P^{\pm}(\mm)}t^{\des(\pi)}q^{\fmaj(\pi)}z^{N(\pi)}}{(t;q^2)_{m+1}}.
\end{equation}

For a fix $k\geq0$, let $B_k(\mm)$ be the set of barred permutations in $B(\mm)$ with $k$ bars. Now each barred permutation in $B_k(\mm)$ can be constructed by putting $k$ bars in one line and then inserting $m_r$ integers from $\{r,-r\}$, for $1\leq r\leq n$, to the $k+1$ spaces between each two adjacency bars (including the space in the left side and the right side), with the rule that  negative integers can not be inserted to the left side. Thus by the well-known interpretation (cf.~\cite[Proposition~4.1]{fh:q}) of the $q$-binomial coefficient 
$$
{n+r\brack n }_q=\sum_{0\leq c_1\leq c_2\cdots\leq c_n\leq r}q^{\sum c_i},
$$
we have 
\begin{equation*}
\sum_{\sigma\in B_k(\mm)} \wt(\sigma)=\prod_{r=1}^n\left(\sum_{i=0}^{m_r}(zq)^i{m_r-i+k\brack m_r-i}_{q^2}{i+k-1\brack i}_{q^2}\right)t^k.
\end{equation*}
Summing over all $k$ in the above equation and comparing with Eq.~\eqref{left} we get~\eqref{sign:eq}.
\end{proof}

%
%
%

\section{Signed version of Simion's result about real-rootedness}
\label{sig:simion}

It was shown in~\cite[Theorem~1.1]{sv} that the ascent polynomial $\sum_{\e\in\I_{2n}^{(\s)}}t^{\asc(\e)}$ has only real roots for each $\s\in\PP^n$. In view of Theorem~\ref{conj:VS} we have
\begin{corollary}\label{real:spec2}
The polynomial
$$
\sum_{\pi\in P^{\pm}(\{1,1,2,2,\ldots,n,n\})} t^{\des(\pi)}
$$
has only real roots for any positive integer $n$.
\end{corollary}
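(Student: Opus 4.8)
The plan is to obtain Corollary~\ref{real:spec2} as a special case of a more general real-rootedness statement for the descent polynomial of arbitrary signed multipermutations, using the factorial generating function formula of Theorem~\ref{thm:signed}. Set $q=z=1$ in Eq.~\eqref{sign:eq}. The right-hand side becomes $\sum_{k\geq 0} \prod_{r=1}^n \binom{2k+m_r}{m_r} t^k$ (using $\binom{m_r-i+k}{m_r-i}\binom{i+k-1}{i}$ summed over $i$, which collapses by Vandermonde), and the denominator $(t;q^2)_{m+1}$ becomes $(1-t)^{m+1}$ where $m=m_1+\cdots+m_n$. Thus the descent polynomial $W_{\mm}(t):=\sum_{\pi\in P^{\pm}(\mm)}t^{\des(\pi)}$ satisfies
$$
\frac{W_{\mm}(t)}{(1-t)^{m+1}}=\sum_{k\geq 0}\left(\prod_{r=1}^n\binom{2k+m_r}{m_r}\right)t^k.
$$
The coefficient of $t^k$ on the right is $h(k):=\prod_{r=1}^n\binom{2k+m_r}{m_r}$, a polynomial in $k$ of degree $m$ with nonnegative values at all nonnegative integers. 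First I would invoke the standard fact (see e.g. the theory surrounding Eulerian-type polynomials, or Brenti's work on the connection between such ``$h^*$-style'' numerators and real-rootedness) that if $h$ is a polynomial of degree $m$ and $\sum_{k\geq0}h(k)t^k = A(t)/(1-t)^{m+1}$, then $A(t)$ is the numerator polynomial and its real-rootedness follows from a suitable positivity/interlacing property of the sequence $h(0),h(1),\ldots$.

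More concretely, I would build an explicit recursion. The key observation is that $h(k)=\prod_{r=1}^n\binom{2k+m_r}{m_r}$ factors, and each factor $\binom{2k+m_r}{m_r}$ is (up to a constant) a product of $m_r$ linear factors $(2k+1)(2k+2)\cdots(2k+m_r)/m_r!$. Passing from $\mm$ to $\mm$ with one coordinate incremented multiplies $h(k)$ by a linear factor $(2k+c)/c$, and on the level of the numerator polynomial this corresponds to applying a differential operator of the shape $a t(1-t)\frac{d}{dt}+(\text{linear in }t)$ — precisely the kind of operator that preserves real-rootedness with nonpositive roots (this is the mechanism already used implicitly in Lemma~\ref{lem:1} via the operators $2t^2(\cdot)''+5t(\cdot)'$, etc.). So the strategy is: start from the base case (a single letter of multiplicity one, where the polynomial is $1+t$ or similarly trivially real-rooted), and show by induction on $m=\sum m_i$ that incrementing a multiplicity applies a real-rootedness-preserving linear operator to $W_{\mm}(t)$.

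The cleanest route avoids case analysis: I would cite the lemma that the operator $t\mapsto$ ``take $\sum h(k)t^k/(1-t)^{d+1}$, multiply $h(k)$ by $(ak+b)$ with $a>0$, $b\geq 0$, and re-extract the numerator over $(1-t)^{d+2}$'' sends a polynomial with only real nonpositive roots and nonnegative coefficients to another such polynomial. This is a known closure property (it is essentially the statement that the ``Veronese-type'' or ``stretching'' operation, combined with the Eulerian operator, preserves real-rootedness; one reference is Brenti or Wagner on Hadamard/linear-operator-preservers, and it also appears in Savage–Visontai's own toolkit for their Theorem~1.1). Applying it $m$ times, peeling off the linear factors of $\prod_r\binom{2k+m_r}{m_r}$ one at a time starting from the constant polynomial $h\equiv1$ (whose numerator is the constant $1$), yields that $W_{\mm}(t)$ is real-rooted for every $\mm$, and in particular for $\mm=(2,2,\ldots,2)$, giving Corollary~\ref{real:spec2}.

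The main obstacle I anticipate is making the ``multiply the coefficient function $h(k)$ by a linear factor'' step rigorous at the level of polynomials: one must check that this operation is well-defined (the degree bookkeeping $d\mapsto d+1$ is forced, since multiplying $h$ by a linear factor raises its degree by one), that it indeed corresponds to an honest linear differential operator $\mathcal{L}_{a,b}$ acting on the numerator, and that $\mathcal{L}_{a,b}$ maps $\{$real-rooted, nonpositive roots, nonnegative coefficients$\}$ into itself. The last point is the heart of the matter; it follows from the classical fact that if $A(t)=\sum a_i t^i$ has only real roots all $\leq 0$, then so does $\sum (\alpha i+\beta) a_i t^i$ for $\alpha>0,\beta\geq0$ (a ``multiplier sequence'' / Laguerre-type argument, or an interlacing argument since $\sum i a_i t^i = tA'(t)$ interlaces $A$). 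Once that closure lemma is in hand, the induction is immediate and Corollary~\ref{real:spec2} drops out as the $m_1=\cdots=m_n=2$ instance.
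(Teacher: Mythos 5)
Your proposal is correct in substance, but it takes a different route from the paper's own proof of Corollary~\ref{real:spec2}. The paper obtains this corollary in one line: by Theorem~\ref{conj:VS} the descent polynomial of $P^{\pm}(\{1,1,\ldots,n,n\})$ equals the ascent polynomial of $\I_{2n}^{(1,4,3,8,\ldots,2n-1,4n)}$, and Savage--Visontai's result (\cite[Theorem~1.1]{sv}) already asserts that every such $\s$-ascent polynomial is real-rooted; no generating-function work is needed at that point. What you do instead is essentially reconstruct the paper's later and more general argument for Theorem~\ref{realroot}: specialize Eq.~\eqref{sign:eq} at $q=z=1$ to get Eq.~\eqref{sig:nq} with coefficient function $h(k)=\prod_r\binom{2k+m_r}{m_r}$ (your Vandermonde collapse is right), and then peel off the linear factors $(2k+c)/c$ one at a time, using a closure lemma of the form of Lemma~\ref{euler} (Brenti's Theorem~2.4.5) saying that multiplying $h(k)$ by $ak+b$ corresponds to the operator $F\mapsto((an-b)t+b)F+at(1-t)F'$, which preserves real-rootedness. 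The trade-off: the paper's route to the corollary is immediate but leans on the external Savage--Visontai theorem and on Theorem~\ref{conj:VS}; yours is self-contained given Theorem~\ref{thm:signed} and proves the stronger statement for arbitrary $\mm$ (which is exactly the paper's Theorem~\ref{realroot}), with the corollary as the case $m_1=\cdots=m_n=2$.

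Two small points you should make explicit to close the argument. First, the closure step is not unconditional: in Lemma~\ref{euler} one needs $n>b/a$, i.e.\ the coefficient $an-b$ in the recurrence must be positive. With your peeling this is fine provided you multiply the factors $(2k+1),(2k+2),\ldots,(2k+m_r)$ of each block in increasing order of the constant term (then at the step with factor $(2k+c)/c$ the current denominator exponent is at least $c$, which exceeds $c/2$), but the order matters and should be stated. Second, the lemma also requires the new numerator to have nonnegative coefficients; this is automatic here because each intermediate numerator is, by Eq.~\eqref{sig:nq} applied to the partial multiset $\{1^{m_1},\ldots,j^{m_j},(j+1)^i\}$, itself a descent polynomial of signed multipermutations, hence in $\PF_1[t^i]$ --- the same observation the paper makes in its proof of Theorem~\ref{realroot}.
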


Simion~\cite[\S~2]{si} proved that the descent polynomial on the permutations of a general multiset has only real roots. We have the following signed version of Simion's result, which generalizes the $m_1=m_2=\cdots=m_n=1$ (i.e. the type B coxeter group) case of Brenti~\cite{br} and Corollary~\ref{real:spec2}.

\begin{theorem}\label{realroot}
The descent polynomial 
$$
\sum_{\pi\in P^{\pm}(\mm)}t^{\des(\pi)}
$$
has only real roots for every $\mm\in\PP^n$. 
\end{theorem}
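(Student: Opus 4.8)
The strategy is to extract from the factorial generating function in Theorem~\ref{thm:signed} (after the specialization $q=1$) an explicit closed form for the descent polynomial $D_{\mm}(t):=\sum_{\pi\in P^{\pm}(\mm)}t^{\des(\pi)}$, exhibited as the numerator of a rational generating function with denominator $(1-t)^{m+1}$, and then apply a standard real-rootedness criterion to that form. Concretely, setting $q=1$ and $z=1$ in~\eqref{sign:eq} gives
\begin{equation*}
\frac{D_{\mm}(t)}{(1-t)^{m+1}}=\sum_{k\geq0}\prod_{r=1}^n\left(\sum_{i=0}^{m_r}\binom{m_r-i+k}{m_r-i}\binom{i+k-1}{i}\right)t^k=\sum_{k\geq0}\prod_{r=1}^n\binom{2k+m_r-1}{m_r}t^k,
\end{equation*}
where the inner Vandermonde-type sum collapses by the Chu--Vandermonde identity. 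So I would first record the identity $D_{\mm}(t)=(1-t)^{m+1}\sum_{k\geq0}\left(\prod_{r=1}^n\binom{2k+m_r-1}{m_r}\right)t^k$, and note that the coefficient of $t^k$ here is a polynomial in $k$ of degree $m=m_1+\cdots+m_n$.

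**Reducing to a linear-operator argument.** The key structural observation is that $h_k:=\prod_{r=1}^n\binom{2k+m_r-1}{m_r}$ is a product of $m$ linear factors in $k$, each of the form $(2k+j)/c$ with $j$ a nonnegative integer; in particular $h_k=\tfrac{1}{c}\prod_{\ell=1}^{m}(2k+a_\ell)$ for suitable nonnegative integers $a_\ell$ (with multiplicity). Now I would use the classical fact — essentially Simion's mechanism, and the one behind Theorem~13 of~\cite{ss} — that the operator $t\mapsto$ "multiply the $k$th coefficient by $(k+1)$" (equivalently $(1-t)D_t\circ$ followed by renormalization, where $D_t=d/dt$) preserves real-rootedness with nonpositive roots, when applied to the numerator polynomial of a series of the form $A(t)/(1-t)^{d+1}$. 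Each linear factor $(2k+a_\ell)$ acting on $\sum_k t^k /(1-t)^{d+1}$ corresponds to one application of such a root-preserving, degree-incrementing operator (the constant $2$ and the shift $a_\ell$ only affect this via $2k+a = 2(k+1)+(a-2)$, i.e. a combination of the "multiply by $k+1$" operator and the identity, which together still preserve the relevant class provided $a\ge 0$). Starting from the series $\sum_{k\ge0}t^k=1/(1-t)$, whose numerator is the constant $1$ (trivially real-rooted), I would apply these $m$ operators one at a time, tracking that at each stage the numerator is a polynomial with only real nonpositive roots and the denominator exponent goes up by one, landing exactly at $D_{\mm}(t)/(1-t)^{m+1}$.

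**The main obstacle.** The delicate point is making precise the claim that $2k+a$ acts as a real-rootedness-preserving operator for every nonnegative integer $a$, not just $a=2$. The clean statement is: if $f(t)$ has only real nonpositive roots and $\deg f \le d$, and $\sum_{k\ge0}g_k t^k = f(t)/(1-t)^{d+1}$, then $(1-t)^{d+2}\sum_{k\ge0}(k+1)g_k t^k = (1-t)(t f(t))' $ — wait, one must instead use $\sum(k+\beta)g_k t^k$ for $\beta\ge 0$, which equals $\big(t^\beta(t^{1-\beta}F(t))'\big)$-type expressions only for integer $\beta$; the honest route is to write $2k+a = 2(k+1) + (a-2)$ and split into cases, or better, to invoke the lemma that the "Eulerian-type" operator $\mathcal{E}_\beta: f \mapsto$ numerator of $\sum (k+\beta) g_k t^k$ over $(1-t)^{d+2}$ preserves the class for all real $\beta\ge 0$, a fact provable via interlacing (the polynomial $\mathcal{E}_\beta f$ equals $\beta f + t(1-t)f' $ up to the change of denominator bookkeeping, and $\beta f$ and $t(1-t)f'$ form a compatible pair when $f$ is real-rooted with nonpositive roots, since $f'$ interlaces $f$ and multiplication by $t(1-t)$ respects the sign pattern on $[-\infty,0]$). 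I would isolate this as a standalone lemma, prove it by the interlacing/compatible-polynomials calculus (as in Brenti~\cite{br} or Wagner's theory), and then the theorem follows by iterating it $m$ times with $\beta$ ranging over the half-integers $a_\ell/2 \ge 0$ coming from the linear factors of $h_k$. A secondary check, purely computational, is verifying the Chu--Vandermonde collapse $\sum_{i=0}^{m_r}\binom{m_r-i+k}{m_r-i}\binom{i+k-1}{i}=\binom{2k+m_r-1}{m_r}$, which is routine but should be stated.
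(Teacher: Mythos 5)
Your overall route is the same as the paper's: specialize $q=z=1$ in Theorem~\ref{thm:signed}, recognize the coefficient of $t^k$ as a product of $m$ linear factors in $k$, and peel these factors off one at a time with an operator of the form $f\mapsto t(1-t)f'+(\lambda t+\mu)f$, finishing with a Brenti-type preservation result. One slip first: the Chu--Vandermonde collapse gives $\binom{2k+m_r}{m_r}=\frac{(2k+1)(2k+2)\cdots(2k+m_r)}{m_r!}$, not $\binom{2k+m_r-1}{m_r}$; your version would make the coefficient at $k=0$ vanish, contradicting the fact that the constant term of $\sum_{\pi\in P^{\pm}(\mm)}t^{\des(\pi)}$ is $1$. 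This is only a computational error and does not change the structure of your argument.

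The genuine gap is in your key lemma: it is \emph{false} that $\mathcal{E}_\beta$ preserves the class of real-rooted polynomials with nonpositive roots for every real $\beta\ge 0$. Doing the ``denominator bookkeeping'' you defer, if $f(t)/(1-t)^{d+1}=\sum_k g_kt^k$ then the new numerator is
$$
t(1-t)f'(t)+\bigl((d+1-\beta)t+\beta\bigr)f(t),
$$
and the term $(d+1-\beta)tf$ --- which your interlacing sketch omits, since you only pair $\beta f$ with $t(1-t)f'$ --- is exactly where a hypothesis is required: one needs $\beta\le d+1$, which is precisely the condition $n>b/a$ in the paper's Lemma~\ref{euler}. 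Concretely, take $f=1$, $d=0$, $\beta=2$: one application gives $2-t$, which has a positive root and so already leaves your class; a second application with $\beta=2$ produces the numerator of $\sum_k(k+2)^2t^k$ over $(1-t)^3$, namely $t^2-3t+4$, which is not even real-rooted. Since your multiset of factors contains $\beta=m_1/2$, which exceeds $d+1$ at early stages of the iteration whenever some $m_r\ge 3$ (your order of application is unspecified), the proof as written applies the lemma outside any range in which it can be true. The repair is exactly what the paper's proof supplies: (i) state the lemma with the extra hypothesis that the coefficient of $tf$ be nonnegative (together with nonnegativity of the coefficients of the target polynomial, the $\PF_1[t^i]$ hypothesis Brenti's theorem needs), and (ii) peel the factors in an order that guarantees this at every step --- for instance one element of the multiset at a time, so that each intermediate numerator is itself a descent polynomial of a smaller multiset and the condition reads $m>m_1/2$, which always holds. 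With these two repairs your argument coincides with the paper's proof.
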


The key point of the proof of the above result lies in the following simple lemma. Let $\PF_1[t^i]$ be the set of all polynomials with nonnegative coefficients and let $\PF[t^i]$ be the set of all real-rooted polynomials in $\PF_1[t^i]$. Actually, $\PF[t^i]$ is the set of all polynomials in $\R[t]$ whose coefficients form a {\em P\'oly frequency sequence} (cf.~\cite[Theorem~2.2.4]{br}).

\begin{lemma}\label{euler}
Let 
\begin{equation}\label{rec:ab}
\frac{F_{n}(t)}{(1-t)^{n+1}}=\sum_{k\geq0}f(k)(ak+b)t^k\quad\text{and}\quad \frac{F_{n-1}(t)}{(1-t)^{n}}=\sum_{k\geq0}f(k)t^k.
\end{equation}
If $a>0, n>\frac{b}{a}$, $F_{n-1}(t)\in \PF[t^i]$ and $F_{n}(t)\in \PF_1[t^i]$, then we have $F_{n}(t)\in \PF[t^i]$.
\end{lemma}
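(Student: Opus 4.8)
The plan is to turn the two generating-function identities in \eqref{rec:ab} into a single differential relation between $F_{n-1}$ and $F_n$, and then to deduce real-rootedness of $F_n$ from that of $F_{n-1}$ by a sign-change (interlacing) argument. Applying the operator $at\frac{d}{dt}+b$ to both sides of the second identity in \eqref{rec:ab}, the right-hand side becomes $\sum_{k\ge0}f(k)(ak+b)t^k$, which equals $F_n(t)/(1-t)^{n+1}$ by the first identity, while the left-hand side, using
$$
t\frac{d}{dt}\,\frac{F_{n-1}(t)}{(1-t)^n}=\frac{t(1-t)F_{n-1}'(t)+ntF_{n-1}(t)}{(1-t)^{n+1}},
$$
equals $\bigl(at(1-t)F_{n-1}'(t)+(b+(an-b)t)F_{n-1}(t)\bigr)/(1-t)^{n+1}$. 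Hence
$$
F_n(t)=at(1-t)F_{n-1}'(t)+\bigl(b+(an-b)t\bigr)F_{n-1}(t).
$$
So Lemma~\ref{euler} reduces to the following polynomial statement: if $g\in\PF[t^i]$, $a>0$, $an>b$, and $F:=at(1-t)g'+\bigl(b+(an-b)t\bigr)g$ lies in $\PF_1[t^i]$, then $F\in\PF[t^i]$. This is a mild extension of the familiar fact that an Eulerian-type recursion $A\mapsto ct(1-t)A'+(\alpha t+\beta)A$ with $c,\alpha,\beta>0$ preserves real-rootedness: here $c=a>0$ and $\alpha=an-b>0$ by the hypothesis $an>b$, but $\beta=b$ is not assumed positive, which is exactly why the extra assumption $F\in\PF_1[t^i]$ is imposed.

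To prove the reduced statement I would locate $\deg F$ real roots of $F$. If $g$ is constant the claim is clear, so let $d:=\deg g\ge1$ and write $g(t)=c\prod_{j=1}^d(t-s_j)$ with $c>0$ and $0\ge s_1\ge\cdots\ge s_d$. Dividing out the largest power of $t$ dividing $g$ (a routine check: the quotient has the same shape with $b$ suitably increased and still lies in $\PF_1[t^i]$), we may assume $g(0)\ne0$, so every $s_j<0$; then $F(0)=bg(0)$ with $g(0)>0$, and nonnegativity of the coefficients of $F$ forces $b\ge0$. Now examine
$$
\frac{F(t)}{g(t)}=at(1-t)\sum_{j=1}^d\frac1{t-s_j}+b+(an-b)t .
$$
Near a negative root $s$ of $g$ the summand $\frac{at(1-t)}{t-s}$ dominates and $as(1-s)<0$, so $F/g\to-\infty$ as $t\to s^+$ and $F/g\to+\infty$ as $t\to s^-$. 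Therefore: (i) $F$ changes sign, hence has a root, in each bounded interval between two consecutive distinct roots of $g$; (ii) a root of $g$ of multiplicity $\mu\ge2$ is a root of $F$ of multiplicity exactly $\mu-1$ (factor out $(t-s)^{\mu-1}$ and evaluate at $s$); (iii) immediately to the right of $s_1$ one has $F<0$, while $F(0)=bg(0)\ge0$, so $F$ has a root in $(s_1,0]$. Items (i)--(iii) take place in pairwise disjoint parts of the real line, so $F$ has at least $d$ real roots counted with multiplicity.

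It remains to control the top of $F$. Expanding, the coefficient of $t^{d+1}$ in $F$ equals $c\,(a(n-d)-b)$. If this vanishes then $\deg F\le d$, so the $d$ roots already found are all of them and $F\in\PF[t^i]$. If it is nonzero, then $F\in\PF_1[t^i]$ forces it to be positive, so $a(n-d)-b>0$; comparing the sign of $F(t)\sim c(a(n-d)-b)t^{d+1}$ as $t\to-\infty$ with the sign of $F$ just below $s_d$ — which is $(-1)^d$, read off from $F/g\to+\infty$ there together with $\operatorname{sgn} g=(-1)^d$ — gives a further sign change in $(-\infty,s_d)$, hence a $(d+1)$st real root; once more $F\in\PF[t^i]$.

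The operator identity is routine; the step needing care is the last one — separating the cases $\deg F=d$ and $\deg F=d+1$ and pinning down the extreme root(s). This is precisely where both hypotheses enter: $an>b$ keeps the coefficient $an-b$ of $t$ in front of $g$ positive, exactly as in the classical Eulerian argument, while $F\in\PF_1[t^i]$ forces the leading coefficient $c(a(n-d)-b)$ to be nonnegative, which fixes the sign behaviour of $F$ at $-\infty$. One should also keep track, as above, that the roots produced by (i)--(iii) and the possible $(d+1)$st root lie in disjoint regions, so that the count is correct.
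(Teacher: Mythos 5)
Your proposal is correct, and its first step coincides with the paper's: applying $at\frac{d}{dt}+b$ to the second identity in \eqref{rec:ab} to get $F_n(t)=at(1-t)F_{n-1}'(t)+\bigl(b+(an-b)t\bigr)F_{n-1}(t)$ (this is the correctly indexed form of the recurrence displayed in the paper, which has a typographical shift $F_{n+1},F_n$ there). Where you diverge is the second step: the paper treats the preservation of real-rootedness under this operator as a black box, citing Brenti's Theorem~2.4.5 (a Rolle-type argument), whereas you prove it from scratch by locating sign changes of $F$ between and beyond the roots of $g=F_{n-1}$, using the hypothesis $F_n\in\PF_1[t^i]$ twice — to force $b\ge 0$ via $F(0)=bg(0)$, and to force the leading coefficient $c(a(n-d)-b)$ to be nonnegative, which settles the behaviour at $-\infty$ and the case split $\deg F=d$ versus $d+1$. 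What your route buys is a self-contained proof that makes visible exactly how $\PF_1$-membership substitutes for positivity of $b$; it also reveals that the hypothesis $n>b/a$ is never actually needed in your argument beyond what $\PF_1[t^i]$ already enforces. What the paper's route buys is brevity and a pointer to a standard, more general interlacing result.

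One small wrinkle to fix in the write-up: in the reduction to $g(0)\ne0$ you factor $g=t^ph$ and obtain $\tilde F=at(1-t)h'+\bigl((b+ap)+(an-b-ap)t\bigr)h$, so $b$ is replaced by $b+ap$, and the hypothesis $an>b$ of your ``reduced statement'' may then fail. This is harmless precisely because your proof of the reduced statement never uses $an>b$ (it only uses $a>0$, $g\in\PF[t^i]$ and $F\in\PF_1[t^i]$), but as written you apply a statement whose stated hypotheses need not hold; either drop $an>b$ from the reduced claim or remark that it is not used.
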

\begin{proof} Clearly, by~\eqref{rec:ab} we have
\begin{equation*}
\frac{F_{n}(t)}{(1-t)^{n+1}}=\sum_{k\geq0}f(k)(ak+b)t^k=b\frac{F_{n-1}(t)}{(1-t)^{n}}+a\left(\frac{F_{n-1}(t)}{(1-t)^{n}}\right)'t. 
\end{equation*}
From the above equation we deduce that
$$
F_{n+1}(t)=\left((an-b)t+b\right)F_n(t)+at(1-t)F_n'(t).
$$
The lemma then follows by applying a result of Brenti~\cite[Theorem~2.4.5]{br}, which was established through some standard argument by using Rolle's theorem.
\end{proof}

\begin{proof}[{\bf Proof of Theorem~\ref{realroot}}]
By setting $q=1,z=1$ in Eq.~\eqref{sign:eq} and using Binomial theorem, we have 
\begin{equation}\label{sig:nq}
\frac{\sum_{\pi\in P^{\pm}(\mm)}t^{\des(\pi)}}{(1-t)^{m+1}}=\sum_{k\geq0}\prod_{r=1}^n\frac{(2k+1)(2k+2)\cdots(2k+m_r)}{m_r!}t^k.
\end{equation}
Observe that for any permutation $\pi$ of $[n]$, 
$$
\sum_{\pi\in P^{\pm}(\mm)}t^{\des(\pi)}=\sum_{\pi\in P^{\pm}(\pi(\mm))}t^{\des(\pi)},
$$
where $\pi(\mm)=\{1^{m_{\pi(1)}},2^{m_{\pi(2)}},\ldots,n^{m_{\pi(n)}}\}$. So we can assume that $m_1\geq m_2\geq\cdots\geq m_n$ and $m_1\geq2$. Let $\mm-\e_1:=\{1^{m_1-1},2^{m_2},\ldots,n^{m_n}\}$. By Eq.~\eqref{sig:nq} we have
$$
\frac{\sum_{\pi\in P^{\pm}(\mm)}t^{\des(\pi)}}{(1-t)^{m+1}}=\sum_{k\geq0}f(k)\frac{(2k+m_1)}{m_1}t^k
\quad\text{and} \quad\frac{\sum_{\pi\in P^{\pm}(\mm-\e_1)}t^{\des(\pi)}}{(1-t)^{m}}=\sum_{k\geq0}f(k)t^k,
$$
where $f(k)=\frac{(2k+1)\cdots(2k+m_1)\cdots(2k+1)\cdots(2k+m_{n-1})\cdots(2k+1)\cdots(2k+m_n-1)}{(m_1-1)!\cdots m_{n-1}!(m_n)!}$.
Note that $m>\frac{m_1}{2}$ and $\sum_{\pi\in P^{\pm}(\mm)}t^{\des(\pi)}\in\PF_1[t^i]$. Thus by Lemma~\ref{euler},  the polynomial $\sum_{\pi\in P^{\pm}(\mm-\e_n)}t^{\des(\pi)}$ is a polynomial in $\PF[t^i]$ implies that of $\sum_{\pi\in P^{\pm}(\mm)}t^{\des(\pi)}$. The theorem then follows by induction on $m$.
\end{proof}

\begin{remark}
Note that the above approach is also available for Simion's result~\cite{si} about the real-rootedness of $\sum_{\pi\in P(\mm)}t^{\des(\pi)}$.
\end{remark}

\begin{corollary}
The descent polynomial 
$$
\sum_{\pi\in P^{\pm}(\mm)}t^{\des(\pi)}
$$ 
is log-concave and unimodal for each $\mm\in\PP^n$.
\end{corollary}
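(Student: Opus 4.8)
The plan is to obtain both properties as immediate consequences of the real-rootedness proved in Theorem~\ref{realroot}, using the classical chain \emph{real-rooted with nonnegative coefficients} $\Rightarrow$ \emph{log-concave with no internal zeros} $\Rightarrow$ \emph{unimodal}.

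Write $D_{\mm}(t):=\sum_{\pi\in P^{\pm}(\mm)}t^{\des(\pi)}=\sum_{i\ge 0}a_it^i$. First I would observe that $D_{\mm}(t)$ has nonnegative integer coefficients (it is a generating polynomial over a statistic) and that $a_0=1$: the only signed permutation of $M_{\mm}$ with no descent is the weakly increasing word $1\,1\,2\,2\,\cdots$ with all signs positive (recall the convention $\pi_0=0$, so index $0$ is not a descent of this word). Next, by Theorem~\ref{realroot} the polynomial $D_{\mm}(t)$ has only real roots; since all $a_i\ge 0$, these roots are real and nonpositive, so $D_{\mm}(t)=c\prod_j(t+r_j)$ with $c>0$ and $r_j\ge 0$. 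As $D_{\mm}(0)=1\neq 0$, each $r_j>0$, and expanding the product shows that \emph{every} coefficient $a_i$ with $0\le i\le \deg D_{\mm}$ is strictly positive; in particular the sequence $(a_i)$ has no internal zeros. Equivalently, $D_{\mm}(t)\in\PF[t^i]$, as was already noted in the proof of Theorem~\ref{realroot}, so its coefficients form a P\'olya frequency sequence.

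Finally I would invoke Newton's inequalities, a standard fact for real-rooted polynomials with nonnegative coefficients: for $1\le i\le \deg D_{\mm}-1$,
$$
a_i^2\ \ge\ a_{i-1}a_{i+1}\cdot\frac{(i+1)(\deg D_{\mm}-i+1)}{i(\deg D_{\mm}-i)}\ \ge\ a_{i-1}a_{i+1},
$$
which is precisely log-concavity of $(a_i)$; a log-concave sequence of strictly positive reals is unimodal, completing the proof. There is no genuine obstacle in this argument; the only point that must not be skipped is the verification that $(a_i)$ has no internal zeros, since without it log-concavity need not imply unimodality — and this is exactly what the positivity of $a_0=D_{\mm}(0)=1$ secures through the factorization above.
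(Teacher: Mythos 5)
Your argument is correct and is exactly what the paper intends: the corollary is stated as an immediate consequence of Theorem~\ref{realroot} via the standard implication that a real-rooted polynomial with nonnegative coefficients (and no internal zeros, which you rightly secure via $a_0=1$ and the factorization into negative roots) is log-concave and hence unimodal. The paper gives no separate proof, so your Newton-inequalities chain fills in precisely the classical argument it relies on.
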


\end{document}